\documentclass[draft]{amsart}
\usepackage{amsmath}
\usepackage{amssymb}

\newtheorem{thm}{Theorem}
\newtheorem{lem}{Lemma}

\theoremstyle{remark}
\newtheorem{rmk}{Remark}

\newcommand{\cv}{\mathbf{C}}

\newcommand{\ball}{\mathbb{B}}
\def\Re{{\sf Re}}

\begin{document}

\title[Estimate of the squeezing function]{Estimate of the squeezing function for a class of bounded domains}

\author[J.E. Forn\ae ss, F. Rong]{John Erik Forn\ae ss, Feng Rong}

\address{Department of Mathematics, NTNU, Sentralbygg 2, Alfred Getz vei 1, 7491 Trondheim, Norway}
\email{johnefo@math.ntnu.no}

\address{Department of Mathematics, School of Mathematical Sciences, Shanghai Jiao Tong University, 800 Dong Chuan Road, Shanghai, 200240, P.R. China}
\email{frong@sjtu.edu.cn}

\subjclass[2010]{32T25, 32F45}

\keywords{squeezing function, finite type}

\thanks{The first named author is partially supported by grant DMS1006294 from the National Science Foundation and grant 240569 from the Norwegian Research Council. The second named author is partially supported by grant 11371246 from the National Natural Science Foundation of China.}

\begin{abstract}
We construct a class of bounded domains, on which the squeezing function is not uniformly bounded from below near a smooth and pseudoconvex boundary point.
\end{abstract}

\maketitle


\section{Introduction}

In \cite{LSY1, LSY2}, the authors introduced the notion of \textit{holomorphic homogeneous regular}. Then in \cite{Y:squeezing}, the equivalent notion of \textit{uniformly squeezing} was introduced. Motivated by these studies, in \cite{DGZ1}, the authors introduced the \textit{squeezing function} as follows.

Denote by $\ball(r)$ the ball of radius $r>0$ centered at the origin 0. Let $\Omega$ be a bounded domain in $\cv^n$, and $p\in \Omega$. For any holomorphic embedding $f:\Omega\rightarrow \ball(1)$, with $f(p)=0$, set
$$s_{\Omega,f}(p):=\sup\{r>0:\ \ball(r)\subset f(\Omega)\}.$$
Then, the squeezing function of $\Omega$ at $p$ is defined as
$$s_\Omega(p):=\sup_f\{s_{\Omega,f}(p)\}.$$

Many properties and applications of the squeezing function have been explored by various authors, see e.g. \cite{DGZ1,DGZ2,DF:Bergman,DFW:squeezing,FS:squeezing,FW:squeezing,KZ:squeezing}.

It is clear that squeezing functions are invariant under biholomorphisms, and they are positive and bounded above by 1. It is a natural and interesting problem to study the uniform lower and upper bounds of the squeezing function.

It was shown recently in \cite{KZ:squeezing} that the squeezing function is uniformly bounded below for bounded convex domains (cf. \cite[Theorem 1.1]{Frankel}). On the other hand, in \cite{DGZ1}, the authors showed that the squeezing function is not uniformly bounded below on certain domains with non-smooth boundaries, such as punctured balls. In \cite{DF}, the authors constructed a smooth pseudoconvex domain in $\cv^3$ on which the quotient of the Bergman metric and the Kobayashi metric is not bounded above near an infinite type point. By \cite[Theorem 3.3]{DGZ2}, the squeezing function is not uniformly bounded below on this domain.

These studies raise the question: Is the squeezing function always uniformly bounded below near a smooth finite type point? In this paper, we answer the question negatively. More precisely, we have the following

\begin{thm}\label{T:main}
Let $\Omega$ be a bounded domain in $\cv^3$, and $q\in \partial\Omega$. Assume that $\Omega$ is smooth and pseudoconvex in a neighborhood of $q$ and the Bloom-Graham type of $\Omega$ at $q$ is $d<\infty$. Moreover, assume that the regular order of contact at $q$ is greater than $2d$ along two smooth complex curves not tangent to each other. Then the squeezing function $s_\Omega(p)$ has no uniform lower bound near $q$.
\end{thm}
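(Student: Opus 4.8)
The plan is to deduce the failure of a uniform lower bound for $s_\Omega$ from a mismatch between the Bergman and the Kobayashi metrics near $q$: the former is an $L^2$ invariant, sensitive only to the \emph{bulk} finite type $d$, whereas the latter can exploit the extreme flatness along the two given curves. Write $\beta_\Omega(p,v)$ and $\kappa_\Omega(p,v)$ for the Bergman and Kobayashi metrics. The starting point is the inequality behind \cite[Theorem 3.3]{DGZ2}, already invoked in the Introduction for the Diederich--Forn\ae ss domain: there is a dimensional constant $C$ with
\[
s_\Omega(p)\,\beta_\Omega(p,v)\ \le\ C\,\kappa_\Omega(p,v),\qquad p\in\Omega,\ v\neq 0 .
\]
Hence it suffices to produce points $p_j\to q$ in $\Omega$ and unit vectors $v_j$ along which $\beta_\Omega(p_j,v_j)/\kappa_\Omega(p_j,v_j)\to+\infty$. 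I would fix local holomorphic coordinates $(z_1,z_2,z_3)$ centered at $q$ with $z_3$ the complex normal direction and $(z_1,z_2)$ spanning $T^{\mathbf{C}}_q\partial\Omega$, arranged so that the two curves $\gamma_1,\gamma_2$ are tangent to the $z_1$- and $z_2$-axes (possible since they are not tangent to one another, so together they span the complex tangent space). Let $p_j\to q$ run along $\gamma_1$, put $v_j=\partial_{z_1}$, and set $\delta_j=\mathrm{dist}(p_j,\partial\Omega)$.

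For the Kobayashi metric I would use a \emph{sharp} upper bound coming from the curve itself. Since $\gamma_1$ has regular order of contact $m>2d$ with $\partial\Omega$, a reparametrized and rescaled copy of $\gamma_1$ through $p_j$ gives an analytic disk of radius comparable to $\delta_j^{1/m}$ in the $z_1$-direction that stays inside $\Omega$ — the disk can hug the very flat boundary for a long parameter range. This yields
\[
\kappa_\Omega(p_j,v_j)\ \lesssim\ \delta_j^{-1/m}.
\]

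The crux, and the step I expect to be the main obstacle, is a lower bound for $\beta_\Omega$ in the \emph{same} direction that is insensitive to the flatness of $\gamma_1$. Here I would argue that, being defined through $L^2$ holomorphic functions, the Bergman metric cannot profit from the measure-zero curve and is instead governed by the finite Bloom--Graham type $d$. Quantitatively I would use the extremal description $\beta_\Omega(p,v)^2=I_1(p,v)/K_\Omega(p,p)$, where $K_\Omega(p,p)$ is the Bergman kernel on the diagonal and $I_1(p,v)=\sup\{|v\cdot\nabla f(p)|^2:\ f\in L^2\cap\mathcal O(\Omega),\ f(p)=0,\ \|f\|_{L^2}\le1\}$, together with an \emph{adapted polydisc} $P_j\subset\Omega$ centered at $p_j$: bounding $K_\Omega(p_j,p_j)\le K_{P_j}(p_j,p_j)\sim 1/\mathrm{vol}(P_j)$ from above, and bounding $I_1(p_j,v_j)$ from below by correcting the linear test function $z\mapsto v_j\cdot(z-p_j)$ to a holomorphic competitor via a $\bar\partial$-problem with a H\"ormander weight adapted to $P_j$. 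The decisive point is that $P_j$ is a genuine \emph{product}: although $\gamma_1$ stays close to $\partial\Omega$, the cross-term coupling of $z_1$ and $z_2$ that produces the bulk type $d$ forbids a polydisc that is simultaneously long in $z_1$ and of full transverse size, so the $z_1$-radius of $P_j$ is controlled by $d$, not by $m$. This is precisely where the two curves are needed: the finite type $d$ is realized through the interaction of the two tangential directions, a genuinely three-dimensional feature. Carrying the estimate out, the general (non-sharp) lower bound that this method delivers is
\[
\beta_\Omega(p_j,v_j)\ \gtrsim\ \delta_j^{-1/(2d)},
\]
the loss of a factor $2$ in the exponent being exactly the gap between the sharp disk estimate for $\kappa_\Omega$ and the robust $L^2$ estimate for $\beta_\Omega$, and hence the reason the hypothesis requires order of contact exceeding $2d$ rather than merely $d$.

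Combining the two bounds, and using $m>2d$,
\[
\frac{\beta_\Omega(p_j,v_j)}{\kappa_\Omega(p_j,v_j)}\ \gtrsim\ \delta_j^{-\left(\frac1{2d}-\frac1m\right)}\ \longrightarrow\ +\infty,
\]
since $\tfrac1{2d}-\tfrac1m>0$. By the inequality of the first paragraph this forces $s_\Omega(p_j)\to0$, so $s_\Omega$ has no uniform lower bound near $q$. The hardest and most delicate part is the Bergman lower bound: one must choose the H\"ormander weight and the polydisc $P_j$ so that the $\bar\partial$-correction preserves both the vanishing $f(p_j)=0$ and the size of $v_j\cdot\nabla f(p_j)$, and one must verify that pseudoconvexity together with finite type $d$ controls $P_j$ and $K_\Omega(p_j,p_j)$ with the claimed exponents.
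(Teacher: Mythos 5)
Your overall strategy---forcing $s_\Omega(p_j)\to 0$ via \cite[Theorem 3.3]{DGZ2} by exhibiting a blow-up of the quotient $\beta_\Omega/\kappa_\Omega$---is logically sound, and it is in fact the route the paper itself attributes to \cite{DF} and \cite{DFH:Bergman} for specific example domains. The Kobayashi upper bound $\kappa_\Omega(p_j,\partial_{z_1})\lesssim\delta_j^{-1/m}$ is fine and is essentially Lemma \ref{L:10}. The genuine gap is the Bergman lower bound $\beta_\Omega(p_j,\partial_{z_1})\gtrsim\delta_j^{-1/(2d)}$: you present it as the outcome of ``carrying the estimate out,'' but this is precisely the hard content, and it is neither established in your sketch nor, as far as I can tell, known in the generality of the theorem. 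Two concrete problems. First, $\Omega$ contains polydiscs centered at $p_j$ that are long (of radius $\sim\delta_j^{1/m}$ with $m>2d$) in the $z_1$-direction at the price of being thin in $z_2$; ruling out that such anisotropic regions make $I_1(p_j,\partial_{z_1})/K_\Omega(p_j,p_j)$ small requires controlling the interaction of the cross terms of $P(z,w)$, which is exactly what \cite{DFH:Bergman} had to do by hand for one explicit polynomial. For a general plurisubharmonic, non-pluriharmonic homogeneous $P$ of degree $d$ vanishing on both axes (possibly containing mixed terms with cancellations), the weight-and-polydisc construction you sketch does not obviously yield the exponent $1/(2d)$; your own heuristic (``governed by the bulk type'') would more naturally give $1/d$, and the factor $2$ is inserted to match the hypothesis rather than derived. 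Second, sharp lower bounds for the Bergman metric on pseudoconvex finite type domains in $\cv^3$ are a well-known difficulty outside special classes, so ``choose the H\"ormander weight adapted to $P_j$'' cannot be taken for granted.

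The paper avoids all of this by never touching the Bergman metric. Instead of comparing two metrics in one direction, it compares the Kobayashi metric in three directions: $K_\Omega(p,\zeta_i)\lesssim\delta^{-1/(2d+1)}$ along the two flat curves (Lemma \ref{L:10}) and $K_\Omega(p,\tfrac{1}{\sqrt2}(\zeta_1+\zeta_2))\gtrsim\delta^{-1/(2d)}$ along the diagonal (Lemma \ref{L:11}, proved by an elementary averaging of $\rho\circ\phi$ over circles---this is where the $2d$ honestly enters, through the possible cancellations in $P$). The resulting Kobayashi indicatrix is long in the directions $\zeta_1,\zeta_2$ but short in the direction $\zeta_1+\zeta_2$, and the elementary Lemma \ref{L:squeezing} shows that no linear map (in particular $f'(p)$ for any candidate embedding $f$) can sandwich such a set between $\ball(3\epsilon)$ and $\ball(1)$. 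To salvage your approach you would need to actually prove the Bergman estimate; as written, the proposal reduces the theorem to an unproved claim that is at least as hard as the theorem itself.
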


\begin{rmk}
The proof gives the estimate $s_\Omega(p)\leq C \delta^{\frac{1}{2d(2d+1)}}$ for some points approaching the boundary.
\end{rmk}

In section \ref{S:prelim}, we recall some preliminary notions and results. In section \ref{S:proof}, we prove Theorem \ref{T:main}.

\section{Preliminaries}\label{S:prelim}

Let $\Omega$ be a bounded domain in $\cv^n$, $n\ge 2$, and $q\in \partial\Omega$. Assume that $\Omega$ is smooth and pseudoconvex in a neighborhood of $q$. The \textit{Bloom-Graham type} of $\Omega$ at $q$ is the maximal order of contact of complex manifolds of dimension $n-1$ tangent to $\partial \Omega$ at $q$ (see e.g. \cite{BG}). Choose local coordinates $(z,t)\in \cv^{n-1}\times \cv$ such that the complex manifold of dimension $n-1$ with the maximal order of contact is given by $\{t=0\}$. Then $\Omega$ is locally given by $\rho(z,t)<0$, where $\rho(z,t)=\Re t+P(z)+Q(z,t)$ with $Q(z,0)\equiv 0$ and $\deg P(z)=d$. (We say that the degree of $P$ is $d$ if the Taylor expansion of $P$ has no nonzero term of degree less than $d$.) Since $\Omega$ is pseudoconvex, we actually have $d=2k$ (see e.g. \cite{D'Angelo}).

For $1\le k\le n-1$, let $\varphi:\cv^k\rightarrow \cv^n$ be analytic with $\varphi(0)=q$ and $\textup{rank} d\varphi(0)=k$. Then the \textit{regular order of contact} at $q$ along the $k$-dimensional complex manifold defined by $\varphi$ is defined as $\deg \rho\circ\varphi$ (see e.g. \cite{D'Angelo}).

Denote by $\Delta$ the unit disc in $\cv$. Let $p\in \Omega$ and $\zeta\in \cv^n$. The \textit{Kobayashi metric} is defined as
$$K_\Omega(p,\zeta):=\inf\{\alpha:\ \alpha>0,\ \exists\ \phi:\Delta\rightarrow \Omega,\ \phi(0)=p,\ \alpha \phi'(0)=\zeta\}.$$
Then the \textit{Kobayashi indicatrix} is defined as (see e.g. \cite{K:indicatrix})
$$D_\Omega(p):=\{\zeta\in \cv^n:\ K_\Omega(p,\zeta)<1\}.$$
For each unit vector $e\in \cv^n$, set $D_\Omega(p,e):=\max\{|\eta|:\ \eta\in \cv,\ \eta e\in D_\Omega(p)\}$. By the definition of Kobayashi indicatrix, the following three lemmas are clear.

\begin{lem}\label{L:ball}
$D_{\ball(r)}(0)=\ball(r)$.
\end{lem}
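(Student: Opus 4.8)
The plan is to reduce everything to a direct computation of the Kobayashi metric $K_{\ball(r)}(0,\zeta)$ at the center, since by definition $D_{\ball(r)}(0)=\{\zeta:\ K_{\ball(r)}(0,\zeta)<1\}$, and the claimed identity $D_{\ball(r)}(0)=\ball(r)$ is equivalent to the clean formula $K_{\ball(r)}(0,\zeta)=|\zeta|/r$, where $|\cdot|$ denotes the Euclidean norm on $\cv^n$. I would establish this formula by proving matching upper and lower bounds.

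For the upper bound I would simply exhibit a competitor disc. Given $\zeta\neq 0$, write $e=\zeta/|\zeta|$ and take $\phi(\xi)=r\xi e$, which maps $\Delta$ into $\ball(r)$ with $\phi(0)=0$ and $\phi'(0)=re$. Setting $\alpha=|\zeta|/r$ gives $\alpha\phi'(0)=\zeta$, so $K_{\ball(r)}(0,\zeta)\le |\zeta|/r$.

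For the lower bound I would invoke the Schwarz lemma. After rescaling by $1/r$ it suffices to show that every holomorphic $\phi:\Delta\rightarrow\ball(1)$ with $\phi(0)=0$ satisfies $|\phi'(0)|\le 1$. I would project onto the direction of the derivative: with $e=\phi'(0)/|\phi'(0)|$, the scalar function $g(\xi)=\langle\phi(\xi),e\rangle$ maps $\Delta$ into $\Delta$ by the Cauchy--Schwarz inequality, fixes $0$, and satisfies $g'(0)=|\phi'(0)|$; the classical Schwarz lemma then forces $|\phi'(0)|\le 1$. Unwinding the scaling shows that every admissible $\phi$ for $\ball(r)$ has $|\phi'(0)|\le r$, so the relation $\alpha\phi'(0)=\zeta$ forces $\alpha\ge|\zeta|/r$, hence $K_{\ball(r)}(0,\zeta)\ge |\zeta|/r$.

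Combining the two bounds yields $K_{\ball(r)}(0,\zeta)=|\zeta|/r$, and therefore $K_{\ball(r)}(0,\zeta)<1$ holds exactly when $|\zeta|<r$, which is the assertion. There is no serious obstacle here; the only point requiring a little care is the lower bound, where one must pass correctly from the vector-valued setting for the ball to the scalar Schwarz lemma via the linear projection onto the direction of $\phi'(0)$. Everything else is immediate from the definitions.
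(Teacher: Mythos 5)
Your proof is correct and complete: the computation $K_{\ball(r)}(0,\zeta)=|\zeta|/r$ via an explicit linear disc for the upper bound and the Schwarz lemma (applied to the projection $\langle\phi(\xi),e\rangle$ onto the direction of $\phi'(0)$) for the lower bound is the standard argument. The paper offers no proof at all --- it declares this lemma, along with Lemmas \ref{L:indicatrix} and \ref{L:indicatrix1}, to be clear from the definition of the Kobayashi indicatrix --- so your write-up simply supplies the details the authors chose to omit.
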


\begin{lem}\label{L:indicatrix}
Let $\Omega_1$ and $\Omega_2$ be two domains in $\cv^n$ with $\Omega_1\subset \Omega_2$. Then for each $p\in \Omega_1$, $D_{\Omega_1}(p)\subset D_{\Omega_2}(p)$.
\end{lem}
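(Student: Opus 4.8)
The plan is to deduce the inclusion of indicatrices directly from the distance-decreasing (more precisely, metric-decreasing) property of the Kobayashi metric under enlargement of the domain. First I would fix $p\in\Omega_1$ and an arbitrary $\zeta\in\cv^n$ and compare the two infima appearing in the definition of $K_{\Omega_1}(p,\zeta)$ and $K_{\Omega_2}(p,\zeta)$. The key observation is purely set-theoretic: any analytic disc $\phi:\Delta\to\Omega_1$ with $\phi(0)=p$ and $\alpha\phi'(0)=\zeta$ is, because $\Omega_1\subset\Omega_2$, automatically a map $\phi:\Delta\to\Omega_2$ with the same center, derivative, and scaling $\alpha$. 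Hence every $\alpha$ admissible for $K_{\Omega_1}(p,\zeta)$ is also admissible for $K_{\Omega_2}(p,\zeta)$, so the infimum defining the latter is taken over a set containing the former. Taking infima over a larger admissible set can only make them smaller or equal, which yields
$$K_{\Omega_2}(p,\zeta)\le K_{\Omega_1}(p,\zeta)\quad\text{for all }p\in\Omega_1,\ \zeta\in\cv^n.$$

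Second, I would feed this inequality into the definition $D_\Omega(p)=\{\zeta:K_\Omega(p,\zeta)<1\}$. If $\zeta\in D_{\Omega_1}(p)$, then $K_{\Omega_1}(p,\zeta)<1$, and combining with the displayed inequality gives $K_{\Omega_2}(p,\zeta)\le K_{\Omega_1}(p,\zeta)<1$, so $\zeta\in D_{\Omega_2}(p)$. Since $\zeta$ was arbitrary, this establishes $D_{\Omega_1}(p)\subset D_{\Omega_2}(p)$, as claimed.

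There is essentially no obstacle here, and this matches the remark in the excerpt that the lemma is ``clear'': the entire content is the tautological remark that discs into the smaller domain compete for the larger one. I would only flag two minor points of care. First, the hypothesis $p\in\Omega_1$ is exactly what guarantees $p$ is a legitimate base point for discs into $\Omega_1$, so the infimum defining $K_{\Omega_1}(p,\zeta)$ is over a nonempty family. Second, no regularity, boundedness, or pseudoconvexity of the domains is used anywhere in the argument---only the inclusion $\Omega_1\subset\Omega_2$---so the lemma holds in this generality.
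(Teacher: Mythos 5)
Your proof is correct and is exactly the standard monotonicity argument the paper has in mind when it declares the lemma ``clear'' without writing a proof: analytic discs into $\Omega_1$ are automatically discs into $\Omega_2$, so $K_{\Omega_2}(p,\zeta)\le K_{\Omega_1}(p,\zeta)$ and the indicatrices are nested. Nothing to add.
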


\begin{lem}\label{L:indicatrix1}
Let $\Omega$ be a domain in $\cv^n$ and $f:\Omega\rightarrow \cv^n$ a biholomorphic map. Then for each $p\in \Omega$, $D_{f(\Omega)}(f(p))=f'(p)D_\Omega(p)$.
\end{lem}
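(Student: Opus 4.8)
The plan is to deduce the statement from the biholomorphic invariance of the Kobayashi metric, namely the identity
$$K_{f(\Omega)}(f(p), f'(p)\zeta) = K_\Omega(p,\zeta) \quad \text{for all } \zeta\in\cv^n,$$
after which the indicatrix equality follows by a linear change of variable. Since $f$ is biholomorphic, $f'(p)$ is an invertible linear map on $\cv^n$, which is what makes that change of variable legitimate.

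First I would establish one inequality. Take any competitor for $K_\Omega(p,\zeta)$: a number $\alpha>0$ together with an analytic disc $\phi:\Delta\to\Omega$ satisfying $\phi(0)=p$ and $\alpha\phi'(0)=\zeta$. Then $\psi:=f\circ\phi:\Delta\to f(\Omega)$ is analytic with $\psi(0)=f(p)$, and by the chain rule $\psi'(0)=f'(p)\phi'(0)$, so $\alpha\psi'(0)=f'(p)(\alpha\phi'(0))=f'(p)\zeta$. Thus $\psi$ is a competitor for $K_{f(\Omega)}(f(p),f'(p)\zeta)$ with the same value of $\alpha$, giving $K_{f(\Omega)}(f(p),f'(p)\zeta)\le K_\Omega(p,\zeta)$.

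Next I would obtain the reverse inequality by applying the identical argument to the biholomorphism $f^{-1}:f(\Omega)\to\Omega$, whose derivative at $f(p)$ is $(f'(p))^{-1}$. This yields $K_\Omega(p,(f'(p))^{-1}\eta)\le K_{f(\Omega)}(f(p),\eta)$ for every $\eta\in\cv^n$; taking $\eta=f'(p)\zeta$ produces $K_\Omega(p,\zeta)\le K_{f(\Omega)}(f(p),f'(p)\zeta)$. Combining the two inequalities proves the displayed invariance identity.

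Finally, I would translate this into the indicatrix statement. By definition $D_{f(\Omega)}(f(p))=\{\eta\in\cv^n:\ K_{f(\Omega)}(f(p),\eta)<1\}$. Writing $\eta=f'(p)\zeta$, which is a bijection of $\cv^n$ since $f'(p)$ is invertible, and invoking the invariance identity, we see that $\eta\in D_{f(\Omega)}(f(p))$ if and only if $K_\Omega(p,\zeta)<1$, i.e. if and only if $\zeta\in D_\Omega(p)$; that is, $\eta\in f'(p)D_\Omega(p)$. Hence $D_{f(\Omega)}(f(p))=f'(p)D_\Omega(p)$. I do not anticipate any serious obstacle, as each step is a routine verification; the only point requiring care is that $\phi\mapsto f\circ\phi$ is a genuine bijection between the two families of competing analytic discs, so that the infima defining the two metrics are taken over corresponding sets and therefore agree exactly. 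This hinges precisely on $f$ being biholomorphic rather than merely holomorphic.
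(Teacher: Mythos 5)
Your proof is correct and is precisely the routine verification the paper has in mind: it states this lemma (together with Lemmas \ref{L:ball} and \ref{L:indicatrix}) without proof as ``clear'' from the definition of the Kobayashi indicatrix, and the intended argument is exactly yours --- invariance of the Kobayashi metric under biholomorphisms via composing competing discs with $f$ and $f^{-1}$, followed by the linear change of variable $\eta = f'(p)\zeta$. No gaps; your attention to the invertibility of $f'(p)$ and to the bijection of disc families is exactly the right care to take.
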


We also need the following localization lemma (see e.g. \cite[Lemma 3]{FL:metrics}). We will use $\gtrsim$ (resp. $\lesssim$, $\simeq$) to mean $\ge$ (resp. $\le$, $=$) up to a positive constant.

\begin{lem}\label{L:localization}
Let $\Omega$ be a bounded domain in $\cv^n$, $q\in \partial \Omega$ and $U$ a neighborhood of $q$. If $V\subset\subset U$ and $q\in V$, then
$$K_\Omega(p,\zeta)\simeq K_{\Omega\cap U}(p,\zeta),\ \ \ \forall\ p\in V,\ \zeta\in \cv^n.$$
\end{lem}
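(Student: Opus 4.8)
The plan is to prove the two inequalities separately: the bound $K_\Omega(p,\zeta)\lesssim K_{\Omega\cap U}(p,\zeta)$ is immediate, while $K_{\Omega\cap U}(p,\zeta)\lesssim K_\Omega(p,\zeta)$ carries all the content and is where a plurisubharmonic barrier at $q$ enters. For the easy half, note that since $\Omega\cap U\subset\Omega$, every analytic disc $\phi:\Delta\rightarrow\Omega\cap U$ with $\phi(0)=p$ is also an admissible competitor in the definition of $K_\Omega(p,\zeta)$. Thus the infimum for $K_\Omega$ runs over a larger family, which forces $K_\Omega(p,\zeta)\le K_{\Omega\cap U}(p,\zeta)$ for every $p\in\Omega\cap U$ and every $\zeta$; this is the infinitesimal counterpart of the monotonicity recorded in Lemma \ref{L:indicatrix}, and it holds with constant $1$ for any domain and any $U$.

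For the reverse inequality on $V$, I would exploit that near $q$ the domain $\Omega$ is smooth, pseudoconvex and of finite type, so it admits a local plurisubharmonic peak function at $q$; gluing this to a bounded plurisubharmonic exhaustion of $\Omega$ produces a negative continuous plurisubharmonic $\lambda$ on $\Omega$ with $\lambda(z)\to 0$ as $z\to q$ and $\lambda\le -c_0<0$ on $\Omega\setminus U$. Given a nearly extremal disc $\phi:\Delta\rightarrow\Omega$ with $\phi(0)=p\in V$ and $\alpha\phi'(0)=\zeta$, where $\alpha$ is close to $K_\Omega(p,\zeta)$, the composition $u:=\lambda\circ\phi$ is subharmonic and negative on $\Delta$ with $u(0)=\lambda(p)$, and shrinking $V$ makes $\lambda(p)$ uniformly close to $0$. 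Applying the sub-mean value inequality on the circles $|w|=r$ and using that $u\le -c_0$ on the escape set $\phi^{-1}(\Omega\setminus U)$, I would bound the harmonic mass of that escape set by $|\lambda(p)|/c_0$, hence uniformly small for $p\in V$.

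The main obstacle is turning this smallness into a genuine competitor for $\Omega\cap U$ whose derivative at $0$ is only a fixed fraction of $\phi'(0)$. The natural device is to reparametrize $\phi$ by a conformal map of the component of $\phi^{-1}(U)$ containing $0$ onto $\Delta$, obtaining a disc into $\Omega\cap U$ based at $p$; this costs a factor equal to the conformal radius of that component at $0$, and the difficulty is bounding this radius from below, i.e., upgrading the small harmonic mass of the escape set into a definite control on the disc near $0$. This is exactly where the barrier must be used quantitatively, and following the Forstner\v{c}ic--Rosay-type strategy one modifies $\phi$ directly via $\lambda$ rather than cutting naively. Once $|\phi'(0)|$ is shown to diminish by at most a fixed factor $c>0$ under the reparametrization, one gets $K_{\Omega\cap U}(p,\zeta)\le \alpha/c$, and letting $\alpha\to K_\Omega(p,\zeta)$ finishes the proof; I expect this last uniform derivative estimate, rather than the construction of $\lambda$, to be the technical heart.
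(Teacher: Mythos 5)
The decisive step of your argument is missing. The easy direction is correct (and holds with constant $1$). For the reverse inequality you reduce everything to showing that, after passing from a nearly extremal disc $\phi:\Delta\to\Omega$ to a disc into $\Omega\cap U$, the derivative at $0$ shrinks by at most a fixed factor $c>0$ --- and you yourself flag this as ``the technical heart'' without proving it. Moreover, the harmonic-measure estimate you do establish cannot supply it: smallness of the escape set $\phi^{-1}(\Omega\setminus U)$ on each circle $|w|=r$ does not bound from below the conformal radius at $0$ of the component of $\phi^{-1}(U)$ containing $0$. For instance, a thin spiral of points where $\phi$ exits $U$ meets every circle in a set of measure zero, yet can pinch that component arbitrarily close to the origin. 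Upgrading the barrier to a genuine derivative estimate is precisely the content of the Forstneri\v{c}--Rosay localization argument, which you allude to but do not carry out; as written, the proof stops just before its crucial estimate.

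There is also a mismatch with the hypotheses, which points to a much simpler route --- the one the paper actually takes, since it gives no proof but cites \cite[Lemma 3]{FL:metrics}. The lemma is stated for an \emph{arbitrary} bounded domain and an \emph{arbitrary} boundary point $q$: no smoothness, pseudoconvexity or finite type is assumed, so your construction of the plurisubharmonic barrier $\lambda$ invokes hypotheses that are simply not available in the statement (they hold near $q$ in the application, but the lemma is general). In fact no barrier is needed: boundedness alone suffices. If $\phi:\Delta\to\Omega$ with $\phi(0)=p\in V$ and $\alpha\phi'(0)=\zeta$, then each coordinate function $\phi_j-\phi_j(0)$ is bounded by $\operatorname{diam}\Omega$ and vanishes at $0$, so the Schwarz lemma gives $\|\phi(w)-p\|\le C|w|$ with $C$ depending only on $\operatorname{diam}\Omega$. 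Hence $\phi(r_0\Delta)\subset \Omega\cap U$ for $r_0=\operatorname{dist}(V,\cv^n\setminus U)/C>0$, and $\tau\mapsto \phi(r_0\tau)$ is a competitor for $K_{\Omega\cap U}$, yielding $K_{\Omega\cap U}(p,\zeta)\le \alpha/r_0$ and thus $K_{\Omega\cap U}(p,\zeta)\le K_\Omega(p,\zeta)/r_0$ uniformly on $V$. Your strategy, once completed, would buy more (constants tending to $1$ as $p\to q$ at plurisubharmonic peak points), but for the lemma as stated it is both incomplete at its key step and needlessly restrictive.
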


By the above lemma, when we consider the size of the Kobayashi indicatrix in the next section, we will work in $\Omega\cap U$.

\section{Estimate of the squeezing function}\label{S:proof}

We first choose local coordinates adapted to our purpose.

\begin{lem}\label{L:normal}
Let $\Omega$ be a bounded domain in $\cv^{n+1}$, $n\ge 1$, and $q\in \partial\Omega$. Assume that $\Omega$ is smooth and pseudoconvex in a neighborhood of $q$ and the Bloom-Graham type of $\Omega$ at $q$ is $2k$, $k\ge 1$. Then there exist local coordinates $(z,t)=(z_1,\cdots,z_n,u+iv)$ such that $q=(0,0)$ and $\Omega$ is locally given by $\rho(z,t)<0$ with
\begin{equation}\label{E:normal}
\rho(z,t)=u+P(z)+Q(z)+vR(z)+u^2+v^2+o(u^2,uv,v^2,u|z|^{2k}),
\end{equation}
where $P(z)$ is plurisubharmonic, homogeneous of degree $2k,$ but not pluriharmonic, $\deg Q(z)\ge 2k+1$ and $\deg R(z)\ge k+1$.
\end{lem}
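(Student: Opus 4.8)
The plan is to obtain \eqref{E:normal} from an arbitrary smooth defining function by three kinds of normalization, each available to us: a complex-linear change of coordinates; a holomorphic reparametrization of the transverse variable, $t\mapsto t+h(z,t)$ with $h(0)=0$ and $dh(0)=0$; and multiplication of $\rho$ by a positive smooth function, which leaves $\Omega$ unchanged but adjusts the Taylor coefficients. First I would fix the linear model: since $\partial\Omega$ is smooth near $q$, choose $\rho$ with $d\rho\neq 0$, so that a complex-linear change places $q=(0,0)$, takes the complex hypersurface of maximal order of contact to $\{t=0\}$, and arranges $d\rho(0)=\Re\,dt$; hence $\rho=u+(\text{terms of order}\ge 2)$. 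By the definition of the Bloom--Graham type, the restriction $\rho(z,0)$ vanishes to order exactly $2k$; I set $P(z)$ to be its homogeneous part of degree $2k$ and $Q(z):=\rho(z,0)-P(z)$, so that $\deg Q\ge 2k+1$ by construction.

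Next I would establish the two properties of $P$. Plurisubharmonicity should follow from pseudoconvexity: the Levi form of $\rho$ is nonnegative on the complex tangent space at boundary points near $q$, and a dilation argument isolating the degree $2k$ part shows that the complex Hessian of $P$ is nonnegative everywhere, i.e. $P$ is plurisubharmonic. That $P$ is not pluriharmonic follows from maximality of the contact: if $P=2\Re H$ for some holomorphic homogeneous $H$ of degree $2k$, then the change $t\mapsto t+2H(z)$ makes $\rho$ vanish to order $>2k$ on the new $\{t=0\}$, contradicting that $2k$ is the maximal order of contact. If $P$ merely had a nonzero pluriharmonic summand, the same change absorbs it without lowering the contact order, so after this absorption $P$ is genuinely not pluriharmonic, and it is nonzero because the contact is exactly $2k$.

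It remains to normalize the dependence on $t$. Writing $\partial\Omega$ as a graph $u=-F(z,\bar z,v)$ and taking $u+F$ as defining function, the Taylor expansion reads $\rho=u+P(z)+Q(z)+v\,\psi(z)+v^2 g(z,\bar z,v)+\cdots$, where $\psi(z)=F_v(z,\bar z,0)$. A holomorphic reparametrization $t\mapsto t+h(z,t)$ absorbs the pluriharmonic and $t$-holomorphic mixed terms of low degree, while the mixed Hessian inequalities coming from pseudoconvexity, applied to the plurisubharmonic leading term $P$, force the part of $\psi$ of degree $\le k$ to vanish; this leaves $v\,R(z)$ with $\deg R\ge k+1$. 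Finally, the quadratic part $\alpha u^2+\beta uv+\gamma v^2$ in $(u,v)$, which pseudoconvexity lets us take to be positive definite for a smoothly bounded domain, can be reduced to $u^2+v^2$: multiplication by $1+cu$ fixes the coefficient of $u^2$, while $t\mapsto t+ct^2$ adjusts $\Re t^2=u^2-v^2$ and $\mathrm{Im}\,t^2=2uv$ so as to remove the cross term and balance the diagonal. All byproducts of these operations, such as the term $u\cdot P(z)$ created by the multiplication, are of the types collected in $o(u^2,uv,v^2,u|z|^{2k})$, giving \eqref{E:normal}.

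The main obstacle I anticipate lies in this last step: one must carry out the simultaneous degree bookkeeping, verifying that the successive holomorphic reparametrizations and multiplications used to clear the low-degree $t$-terms and to normalize the $(u,v)$-quadratic form neither disturb the homogeneous term $P$ nor the threshold $\deg Q\ge 2k+1$, and that every term they create is genuinely absorbed into the error $o(u^2,uv,v^2,u|z|^{2k})$. The delicate point is to show, purely from the plurisubharmonicity of $P$ and pseudoconvexity, that the relevant $v$-coefficients of degree $\le k$ must vanish; the remaining manipulations are routine but must be organized so that the stated degree bounds on $P$, $Q$, and $R$ all emerge simultaneously.
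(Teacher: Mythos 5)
Your outline follows the same skeleton as the paper's proof: fix the linear model with $\{t=0\}$ the hypersurface of maximal contact, read off $P$ and $Q$ from $\rho(z,0)$, get plurisubharmonicity of $P$ from pseudoconvexity and non-pluriharmonicity from maximality of the Bloom--Graham type, absorb pluriharmonic low-degree $v$-coefficients by holomorphic changes of $t$, and normalize the $(u,v)$-quadratic part. However, the decisive step --- showing that after the pluriharmonic components are absorbed, no non-pluriharmonic homogeneous component of the $v$-coefficient of degree $l\le k$ can survive --- is only asserted ("the mixed Hessian inequalities coming from pseudoconvexity, applied to the plurisubharmonic leading term $P$, force the part of $\psi$ of degree $\le k$ to vanish"), and you yourself flag it as the delicate point. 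This is not a routine Hessian inequality at the base point: the paper's argument requires Forelli's theorem to produce a complex line on which $P$ is subharmonic but not harmonic, a choice of complex tangent vector $\xi$ with Levi form of $P$ equal to $|\eta|^{2k-2}G(\theta)\|\xi\|^2$ at $\eta\xi$, and then an evaluation of the Levi form of $\rho$ at boundary points with $v=\pm C|\eta|^k$, $C>\max_\theta G(\theta)$, where the term $v\sum \partial^2 B_l/\partial z_k\partial\bar z_m\,\xi_k\bar\xi_m\simeq |\eta|^{k+l-2}\|\xi\|^2$ dominates the nonnegative contribution $|\eta|^{2k-2}G(\theta)\|\xi\|^2$ of $P$ precisely because $l\le k$, and can be given either sign. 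Without some version of this computation the lemma is not proved; the heart of the argument is missing from your proposal.

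Two smaller points. First, pseudoconvexity does not make the quadratic form $\alpha u^2+\beta uv+\gamma v^2$ positive definite (it says nothing about the real Hessian in the normal variable); what is true, and what the paper uses, is that $t\mapsto t+dt^2$ together with multiplication by $1+eu$ and $1+ev$ lets you prescribe these three coefficients freely, so positive definiteness is neither available nor needed. Second, the term $u\cdot P(z)$ created by multiplying with $1+cu$ is exactly of size $u|z|^{2k}$, not $o(u|z|^{2k})$; to push the coefficient of $u$ into the error term you need the paper's iteration (multiply by $1-A(z)$, then $1-A'(z)$, etc., doubling the degree each time until it exceeds $2k$), not a single multiplication.
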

\begin{proof}
By assumption, we have a local defining function of the form
$$\rho(z,t)=u+P(z)+Q(z)+au^2+buv+cv^2+uA(z)+vB(z)+o(|t|^2),$$
where $P(z)$ is plurisubharmonic, homogeneous of degree $2k,$ but not pluriharmonic,  $\deg Q(z)\ge 2k+1$,
and $\deg A(z),B(z)\ge 1$. By changing $t$ to $t+dt^2$ and multiplying with $1+eu$ or $1+ev$, we can freely change the quadratic terms in $u,v$. Thus, we can assume that
$$\rho(z,t)=u+P(z)+Q(z)+u^2+v^2+uA(z)+vB(z)+o(|t|^2).$$
Multiplying with $1-A(z)$, we get a new $A$, say $A'$ of degree at least $2$. Multiplying with $1-A'(z)$ we get
a new $A$ of degree at least $4$. Continuing, we can further assume that
$$\rho(z,t)=u+P(z)+Q(z)+u^2+v^2+vB(z)+o(|t|^2, u|z|^{2k}).$$

Write $B(z)=B_s(z)+B'(z)$, where $B_s(z)$ is the lowest order homogeneous term of degree $s\ge 1$. Assume that $B_s(z)$ is pluriharmonic. Then there exists a holomorphic function $F(z)=A(z)-iB_s(z)$. Change again $\rho$ to add the term $uA(z)$ with this new $A(z)$. Then $\rho$ takes the form
$$\begin{aligned}
\rho(z,t)&=u+P(z)+Q(z)+u^2+v^2+uA(z)+vB_s(z)+vB'(z)+o(|t|^2,u|z|^{2k})\\
&=u+P(z)+Q(z)+u^2+v^2+\Re(tF(z))+vB'(z)+o(|t|^2,u|z|^{2k}).
\end{aligned}$$
By absorbing $\Re(tF(z))$ into $u$, we get
$$\rho(z,t)=u+P(z)+Q(z)+u^2+v^2+vB'(z)+o(|t|^2,u|z|^{2k}).$$
Continuing this process, we can assume that $\rho$ takes the form
$$\rho(z,t)=u+P(z)+Q(z)+u^2+v^2+vB_l(z)+vB'(z)+o(|t|^2,u|z|^{2k}),$$
where $B_l(z)$ is not pluriharmonic and $l\le k$ or $l\ge k+1$. We assume the first alternative, otherwise the proof is done. We will arrive at a contradiction to pseudoconvexity.

Note that $P(z)$ is plurisubharmonic but not pluriharmonic. This implies that there exists a complex line through the origin on which the restriction of $P$ is subharmonic, but not harmonic (cf. \cite{Forelli}). Pick a tangent vector $\xi=(\xi_1,\dots \xi_n)$ so that the Levi form of $P$ calculated at a point $\eta\xi$, $\eta=|\eta|e^{i\theta}$, in the direction of $\xi$ is $|\eta|^{2k-2}G(\theta)\|\xi\|^2$. Here $G$ is a smooth nonnegative function which at most vanishes at finitely many angles. Choose $\lambda$ such that $\sigma=(\xi,\lambda)$ is a complex tangent vector to $\partial \Omega$, i.e.
$$\sum_{j=1}^n \frac{\partial \rho}{\partial z_j}\xi_j+ \frac{\partial \rho}{\partial t}\lambda=0.$$
Then we have $|\lambda|=O(|\eta|^{2k-1}+|v||\eta|^{l-1}+|t|^2+|u||\eta|^{2k-1})\|\xi\|$.

The Levi form of $\rho$ at a boundary point $(\eta\xi,t)$, along the tangent vector $\sigma$ is
$$\begin{aligned}
\mathcal L(r,\sigma)=&|z|^{2k-2}G(\theta)\|\xi\|^2+|\lambda|^2+\mathcal L(vB_l(z),\sigma)+\cdots\\
=&|z|^{2k-2}G(\theta)\|\xi\|^2+|\lambda|^2+\Re(\sum_{j=1}^n\frac{\partial B_l}{\partial z_j}i\xi_j\overline{\lambda})+v\sum_{k,m}\frac{\partial^2 B_l}{\partial z_k\overline z_m}\xi_k\overline{\xi}_m+\cdots.
\end{aligned}$$

Since $B_l$ is not pluriharmonic, we can assume after changing $\xi$ slightly that $\frac{\partial^2 B_l}{\partial z_k\partial \overline{z}_m}\xi_k\overline{\xi}_m\neq 0$. Next choose $v=\pm C|\eta|^k$ with $C>\max_\theta\{G(\theta)\}$. The second term is $o(|\eta|^{2k-2}\|\xi\|^2)$ and the third terms is $o(|\eta|^{k+l-2}\|\xi\|^2)$. The last term is $\simeq (|\eta|^{k+l-2}\|\xi\|^2)$ and, since $l\le k$, at least $O(|\eta|^{2k-2}\|\xi\|^2)$. Thus we have $\mathcal L(r,\sigma)<0$. This is a contraction.
\end{proof}

By Lemma \ref{L:normal}, we can choose local coordinates $(z,w,t)=(z,w,u+iv)$ near $q$ such that $q=(0,0,0)$ and $\Omega$ is locally given by $\rho(z,w,t)<0$, where
\begin{equation}\label{E:rho}
\rho(z,w,t)=u+P(z,w)+Q(z,w)+vR(z,w)+u^2+v^2+o(u^2,uv,v^2,u|(z,w)|^{2k}).
\end{equation}
Here $P(z,w)$ is homogeneous of degree $2k$ with $P(z,0)=P(0,w)=0$, $\deg Q(z,w)\ge 2k+1$ with $\deg Q(z,0)\ge 4k+1$ and $\deg Q(0,w)\ge 4k+1$, and $\deg R(z,w)\ge k+1$. Set $p=(0,0,-\delta)$ with $0<\delta\ll 1$.

\begin{lem}\label{L:10}
Let $\zeta_1=(1,0,0)$ and $\zeta_2=(0,1,0)$. Then $K_\Omega(p,\zeta_1),K_\Omega(p,\zeta_2)\lesssim \delta^{-\frac{1}{4k+1}}$.
\end{lem}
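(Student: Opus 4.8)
The plan is to produce explicit analytic discs $\phi:\Delta\to\Omega$ through $p=(0,0,-\delta)$ whose derivative at the origin points in the $\zeta_1$ (resp.\ $\zeta_2$) direction and has magnitude $\gtrsim \delta^{\frac{1}{4k+1}}$; by the definition of the Kobayashi metric this immediately gives $K_\Omega(p,\zeta_j)\lesssim \delta^{-\frac{1}{4k+1}}$. By the localization Lemma~\ref{L:localization} it suffices to work in $\Omega\cap U$ with the local defining function~\eqref{E:rho}, so I only need the disc to land in $\{\rho<0\}$ near $q$.

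The key observation is the vanishing pattern recorded after Lemma~\ref{L:normal}: along the $z$-axis one has $P(z,0)=0$ and $\deg Q(z,0)\ge 4k+1$, and similarly along the $w$-axis. So for $\zeta_1$ I would try the disc $\phi(\lambda)=(c\lambda,0,-\delta+g(\lambda))$ with $c\simeq \delta^{\frac{1}{4k+1}}$, choosing the small $t$-correction $g(\lambda)$ (with $g(0)=0$) to absorb the imaginary part and keep $u=\Re t$ negative. Substituting into~\eqref{E:rho}, the dominant terms are $u\approx -\delta$, the quadratic $u^2\simeq \delta^2$, and $Q(c\lambda,0)$, which by $\deg Q(z,0)\ge 4k+1$ is of size $|c|^{4k+1}\simeq \delta$. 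Thus the condition $\rho<0$ reads, schematically,
$$
-\delta+O(\delta^2)+O\!\bigl(|c|^{4k+1}\bigr)+\text{(corrections)}<0,
$$
and choosing $|c|\simeq \delta^{\frac{1}{4k+1}}$ makes the $Q$-term comparable to $\delta$ but with a controllable constant, so after rescaling $c$ by a small fixed factor the leading $-\delta$ dominates and $\rho<0$ holds on all of $\Delta$. The disc $\phi(\lambda)/\,|\text{scaling}|$ then has $\phi'(0)=(c,0,g'(0))$, and by picking $g$ with $g'(0)$ negligible (or reparametrizing) the effective competitor vector is $\simeq c\,\zeta_1$, giving the bound. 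The argument for $\zeta_2$ is identical using $\deg Q(0,w)\ge 4k+1$.

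The main obstacle, and where care is needed, is controlling the cross terms in~\eqref{E:rho} along the disc — in particular $vR(z,w)$, the mixed $uv$ term, and the error $o(u^2,uv,v^2,u|(z,w)|^{2k})$ — so that none of them overwhelm the $-\delta$ margin. Since $\deg R\ge k+1$ while $v=\Im t$ can be taken $O(\delta)$ along the disc (the correction $g$ being small), the term $vR$ is of order $\delta\cdot|c|^{k+1}\simeq \delta^{1+\frac{k+1}{4k+1}}=o(\delta)$, and the $u|(z,w)|^{2k}$ error is $\delta\cdot|c|^{2k}\simeq\delta^{1+\frac{2k}{4k+1}}=o(\delta)$; so each is subdominant. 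The real bookkeeping is verifying that a single holomorphic choice of $g(\lambda)$ simultaneously handles the imaginary part $v$ coming from $c\lambda$ and keeps $\Re t<0$; I would make $g$ explicit (e.g.\ a small constant times $\lambda$ plus a quadratic correction) and check the sign of $\rho$ uniformly for $|\lambda|\le 1$, which reduces to the elementary degree count above.
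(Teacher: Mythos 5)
Your proposal is correct and follows essentially the same route as the paper: an analytic disc along the $z$-axis (resp.\ $w$-axis) of radius $\simeq\delta^{\frac{1}{4k+1}}$, using $P(z,0)=0$ and $\deg Q(z,0)\ge 4k+1$ to see that $\rho<0$ persists. The only difference is that the paper simply takes $t\equiv-\delta$ constant (so $v\equiv 0$ and no correction $g(\lambda)$ is needed at all, since $Q$ is real-valued rather than holomorphic and there is no imaginary part to absorb), which makes the bookkeeping you flag at the end unnecessary.
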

\begin{proof}
Consider the linear map $\phi:\Delta\rightarrow \cv^3$ with $\phi(\tau)=(\beta\tau,0,-\delta)$ for $\tau\in \Delta$, and $|\beta|=\epsilon \delta^{\frac{1}{4k+1}}$ for $0<\epsilon\ll 1$. Then
$$\rho\circ\phi(\tau)\le -\delta+C|\beta\tau|^{4k+1}+o(\delta)<-\delta+\epsilon\delta+o(\delta)<0.$$
Therefore, $K_\Omega(p,\zeta_1)\lesssim \delta^{-\frac{1}{4k+1}}$. The argument in the direction $\zeta_2$ is similar.
\end{proof}

Let $(a,b,0)$ be a point so that $P(a \tau,b \tau)$ is a subharmonic homogeneous polynomial of degree $2k$ which is not harmonic. Then both $a$ and $b$ must be nonzero. By scaling in each variable, we can assume that $a=b=1/{\sqrt {2}}$. 

\begin{lem}\label{L:11}
Let $\zeta=\frac{1}{\sqrt{2}}(1,1,0).$  Then $K_\Omega(p,\zeta)\gtrsim \delta^{-\frac{1}{4k}}$.
\end{lem}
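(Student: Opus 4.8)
The plan is to prove the equivalent estimate on analytic discs. By Lemma \ref{L:localization} it suffices to bound $K_{\Omega\cap U}(p,\zeta)$ from below, so I take an arbitrary $\phi=(\phi_1,\phi_2,\phi_3):\Delta\rightarrow \Omega\cap U$ with $\phi(0)=p$ and $\phi'(0)=\mu\zeta$, and aim to show $|\mu|\lesssim \delta^{1/(4k)}$; since $K_\Omega(p,\zeta)=1/\sup|\mu|$ over such discs, this gives the lemma. Here $\phi_i(0)=0$ for $i=1,2$, $\phi_3(0)=-\delta$, $\phi_1'(0)=\phi_2'(0)=\mu/\sqrt 2$ and $\phi_3'(0)=0$. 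The three natural potentials on $\Delta$ are $P(\phi_1,\phi_2)$ (subharmonic, vanishing at $0$ since $P(0,0)=0$), $|\phi_3|^2$ (subharmonic), and $\Re\phi_3$ (harmonic, equal to $-\delta$ at $0$). I write $\langle h\rangle:=\frac{1}{2\pi}\int_0^{2\pi}h(r_0e^{i\theta})\,d\theta$ for the mean value on a fixed circle $|\tau|=r_0$, $r_0\in(0,1)$.

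The engine is the defining inequality $\rho\circ\phi<0$. Expanding \eqref{E:rho} along $\phi$ gives, pointwise on $\Delta$,
\[ \Re\phi_3 + P(\phi_1,\phi_2) + |\phi_3|^2 + E <0, \]
where $E=Q(\phi_1,\phi_2)+(\Im\phi_3)R(\phi_1,\phi_2)+o(\cdots)$ collects the genuinely higher-order terms. Two consequences drive the proof. First, a \emph{pointwise lower bound} for $\phi_3$: wherever $P(\phi_1,\phi_2)>0$, the inequality forces $-\Re\phi_3>P(\phi_1,\phi_2)+|\phi_3|^2+E$, and since $|\phi_3|\ge -\Re\phi_3$ and $E$ is subordinate to $P$, one gets $|\phi_3|\gtrsim P(\phi_1,\phi_2)$ there. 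Second, an \emph{averaged upper bound}: integrating the inequality over $|\tau|=r_0$ and using $\langle \Re\phi_3\rangle=-\delta$ together with $\langle P(\phi_1,\phi_2)\rangle\ge P(0,0)=0$ (sub-mean value) yields $\langle |\phi_3|^2\rangle \lesssim \delta+|\langle E\rangle|$.

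Combining the two gives $\langle|\phi_3|^2\rangle\gtrsim \langle (P(\phi_1,\phi_2))_+^2\rangle$, so the whole estimate reduces to a \emph{robust nondegeneracy} statement: for every admissible disc, $\langle (P(\phi_1,\phi_2))_+^2\rangle \gtrsim |\mu|^{4k}$. To see why $4k$ is the right exponent, write $\phi_i(\tau)=\tfrac{\mu}{\sqrt2}\tau+O(\tau^2)$; the lowest-order part of $P(\phi_1,\phi_2)$ is the diagonal restriction $\widetilde P(\mu\tau)$, homogeneous of degree $2k$, subharmonic and \emph{not} harmonic, hence with strictly positive mean and positive part of definite size $\simeq|\mu|^{2k}$. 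In the model case $P=|zw|^k$ this is transparent: $(\phi_1\phi_2)^k$ is holomorphic with $\tau^{2k}$-coefficient $(\mu^2/2)^k$, so Parseval gives $\langle|\phi_1\phi_2|^{2k}\rangle\gtrsim|\mu|^{4k}$ \emph{regardless} of the higher-order behavior of the disc. Feeding $|\mu|^{4k}\lesssim\langle|\phi_3|^2\rangle\lesssim\delta$ back then yields $|\mu|\lesssim\delta^{1/(4k)}$, that is $K_\Omega(p,\zeta)\gtrsim \delta^{-1/(4k)}$.

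The main obstacle is precisely the robust nondegeneracy bound for \emph{general} $P$ (not a pure product) together with the control of $E$. Unlike $|zw|^k$, a signed subharmonic $P$ is not a modulus-power, so there is no immediate Parseval identity; the disc's higher-order Taylor coefficients are free and might, a priori, conspire to cancel the contribution of the fixed first-order data on $|\tau|=r_0$, or to route $(\phi_1,\phi_2)$ into the zero locus $\{z=0\}\cup\{w=0\}$ of $P$ where the positive part is lost. I expect this to be handled by exploiting the homogeneity and strict subharmonicity of $\widetilde P$ to pin a definite amount of positive $L^2$-mass coming from the prescribed coefficient $\phi_1'(0)\phi_2'(0)=\mu^2/2$, either through a normal-families argument after rescaling by $|\mu|$ or through a direct lower bound on a suitable Fourier coefficient of $P(\phi_1,\phi_2)$. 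For the error term one uses that $E$ vanishes to higher order than $P$ along the diagonal, while along the axes the hypotheses $\deg Q(z,0),\deg Q(0,w)\ge 4k+1$ make $E=o(\delta)$; the delicate point is to reconcile these regimes through a pointwise domination of the form $|E|\lesssim P(\phi_1,\phi_2)+|(\phi_1,\phi_2)|^{4k+1}$, so that $E$ never competes with the main terms in either the pointwise or the averaged estimate.
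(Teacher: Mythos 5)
Your setup is the right one and matches the paper's engine: restrict to an analytic disc $\phi$ with $\phi(0)=p$ and $\phi'(0)=\mu\zeta$, exploit $\rho\circ\phi<0$, and average over a circle using that $\Re\phi_3$ is harmonic with value $-\delta$ at the center while $P(\phi_1,\phi_2)$ is subharmonic. You have also correctly located the crux: everything reduces to a quantitative lower bound for the circular mean of $P(\phi_1,\phi_2)$ in terms of $|\mu|$. But that is precisely the step you do not prove, and as you have framed it --- on a circle of \emph{fixed} radius $r_0$ --- there is no way to prove it from the data you have. The higher Taylor coefficients of $\phi_1,\phi_2$ are only $O(1)$ (Cauchy estimates from boundedness of $\Omega$), not $O(|\mu|)$, so on $|\tau|=r_0$ the quadratic remainders swamp the prescribed linear part as $\mu\to 0$ and the position of the disc on that circle is essentially unconstrained by $\phi'(0)$; since $P$ is signed (merely plurisubharmonic, not a modulus-power), nothing prevents the positive mass of $P(\phi_1,\phi_2)$ on $|\tau|=r_0$ from being destroyed. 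Your two suggested rescues are, respectively, unavailable in general (Fourier/Parseval extraction works only for product-type $P$, as you note) and not carried out (the ``rescaling by $|\mu|$''). The detour through the pointwise bound $|\phi_3|\gtrsim P$ and $\langle(P_+)^2\rangle$ is also unnecessary: averaging the defining inequality directly gives $\langle P(\phi_1,\phi_2)\rangle\lesssim\delta$ once the $vR$ term and the error terms have been absorbed (the paper does this beforehand by completing a square to replace $\rho$ by a minorant $\tilde\rho$).

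The single missing idea is the choice of the circle's radius. The paper writes $\phi(\tau)=(\beta\tau+f,\beta\tau+g,-\delta+h)$ with $|f|,|g|,|h|\le|\tau|^2$ and evaluates the averaged inequality on $|\tau|=c|\beta|$ for a small constant $c$, i.e.\ at a scale comparable to the quantity being estimated. There the remainders are an $O(c)$-fraction of the linear part, so expanding the homogeneous polynomial $P$ yields
$\langle P(\phi_1,\phi_2)\rangle\gtrsim|\beta\tau|^{2k}-\sum_{i=0}^{2k-1}|\beta|^{i}|\tau|^{4k-i}=\bigl(c^{2k}-O(c^{2k+1})\bigr)|\beta|^{4k}$,
where the positivity of the leading term is exactly the statement that $P(\tau/\sqrt2,\tau/\sqrt2)$ is subharmonic, homogeneous of degree $2k$, and not harmonic (so its circular means are a positive multiple of $r^{2k}$). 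Combined with $\langle P\rangle\lesssim\delta$ this gives $|\beta|^{4k}\lesssim\delta$, hence $|\mu|\lesssim\delta^{1/4k}$. Without this small-radius evaluation (or an equivalent rescaling argument), your proof is incomplete at its central step.
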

\begin{proof}
For $z,w$ small, we have
$$\begin{aligned}
v^2/2+vR(z,w)&\ge v^2/2-C|v|\|z,w\|^{k+1}+C^2\|z,w\|^{2k+2}-C^2\|z,w\|^{2k+2}\\
&\ge -C^2\|z,w\|^{2k+2}.
\end{aligned}$$
Therefore,
$$\begin{aligned}
\rho&\ge u+P(z,w)+Q(z,w)-C^2\|z,w\|^{2k+2}+u^2+v^2/2+o(u^2,uv,v^2,u|(z,w)|^{2k})\\
&\ge u+P(z,w)+\tilde{Q}(z,w)+u^2/2+v^2/4+o(u|(z,w)|^{2k})=:\tilde{\rho}.
\end{aligned}$$

Consider an analytic map $\phi:\Delta\rightarrow \Omega$ with
$$\phi(\tau)=(\beta\tau+f(\tau),\beta\tau+g(\tau),-\delta+h(\tau)),\ \ \ |f(\tau)|,|g(\tau)|,|h(\tau)|\le |\tau|^2.$$
Then $\tilde{\rho}\circ\phi(\tau)\leq\rho\circ\phi(\tau)<0$. And we have
$$\begin{aligned}
\tilde{\rho}(\phi(\tau))=&-\delta+\Re h(\tau)+P(\beta\tau+f(\tau),\beta\tau+g(\tau))+\tilde{Q}(\beta\tau+f(\tau),\beta\tau+g(\tau))\\
&+u^2/2+v^2/4+o(u|(z,w)|^{2k})<0,
\end{aligned}$$
and
\begin{equation}\label{E:average}
\frac{1}{2\pi}\int_0^{2\pi} \tilde{\rho}(\phi(|\tau|e^{i\theta}))d\theta<0.
\end{equation}

Note that, by the homogeneous expansion of $P(z,w)$, we have for $|\tau|<1$ small
\begin{equation}\label{E:xitau}
|\beta\tau|^{2k}-\sum_{i=0}^{2k-1} |\beta|^i|\tau|^{4k-i}\lesssim \delta.
\end{equation}
Choose $|\tau|=c|\beta|$ for some small constant $c>0.$  Then \eqref{E:xitau} gives
$$\left|\frac{\beta}{2}\right|^{4k}\lesssim \delta.$$
Hence, $K_\Omega(p,u)\gtrsim \delta^{-\frac{1}{4k}}$. 
\end{proof}

\begin{lem}\label{L:squeezing}
Let $D$ be a bounded domain in $\cv^n$, $n\ge 2$, containing the origin. Assume that there exist two linearly independent nonzero vectors $\zeta_1,\zeta_2\in D$ and $\epsilon>0$ such that $\epsilon(\zeta_1+\zeta_2)\not\in D$. Then there does not exist a linear map $L:D\rightarrow \cv^n$, with $L(0)=0$, such that $\ball(3\epsilon)\subset L(D)\subset \ball(1)$.
\end{lem}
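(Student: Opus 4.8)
The plan is to argue by contradiction: assume a linear map $L$ with $L(0)=0$ and $\ball(3\epsilon)\subset L(D)\subset\ball(1)$ exists, and then feed the three hypotheses into two competing size estimates on the single vector $L(\zeta_1)+L(\zeta_2)$. The first thing I would establish is that such an $L$ must be \emph{invertible}. Indeed, since $L(D)\subset L(\cv^n)$ and $L(D)\supset\ball(3\epsilon)$ contains a nonempty open set, the image $L(\cv^n)$ cannot be a proper subspace; hence $L$ is surjective, and being a linear endomorphism of $\cv^n$ it is bijective. This invertibility is what lets me transport a non-membership statement about $D$ into one about $L(D)$.

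With invertibility in hand, the lower bound comes from the excluded point. Since $L$ is injective and $\epsilon(\zeta_1+\zeta_2)\notin D$, I get $\epsilon\bigl(L(\zeta_1)+L(\zeta_2)\bigr)=L\bigl(\epsilon(\zeta_1+\zeta_2)\bigr)\notin L(D)$; combined with $\ball(3\epsilon)\subset L(D)$ this forces the point outside $\ball(3\epsilon)$, so $|L(\zeta_1)+L(\zeta_2)|\ge 3$. The upper bound comes from the other containment: since $\zeta_1,\zeta_2\in D$ we have $L(\zeta_1),L(\zeta_2)\in L(D)\subset\ball(1)$, whence by the triangle inequality $|L(\zeta_1)+L(\zeta_2)|\le|L(\zeta_1)|+|L(\zeta_2)|<2$. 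The two estimates $3\le|L(\zeta_1)+L(\zeta_2)|<2$ are incompatible, which is the desired contradiction.

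The only genuinely nonroutine step is the invertibility of $L$, which is exactly what is needed to push the non-membership of $\epsilon(\zeta_1+\zeta_2)$ through $L$; everything else is the triangle inequality. I would remark that the linear independence of $\zeta_1,\zeta_2$ is not actually used in the argument — the hypothesis $\epsilon(\zeta_1+\zeta_2)\notin D$ together with $0\in D$ already guarantees $\zeta_1+\zeta_2\ne 0$ — so that assumption is there only for the application. The real engine is the gap between the factor $3$ in the inner containment and the bound $2$ produced by the triangle inequality, and this margin makes the conclusion robust regardless of whether the balls $\ball(r)$ are taken to be open or closed.
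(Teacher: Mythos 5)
Your proposal is correct and follows essentially the same route as the paper: push the excluded point $\epsilon(\zeta_1+\zeta_2)$ through $L$ to get $\|L(\zeta_1)+L(\zeta_2)\|\ge 3$ from the inner inclusion, and contradict this with the triangle-inequality bound $\le 2$ from the outer inclusion. Your explicit justification that $L$ is invertible (because $L(D)\supset\ball(3\epsilon)$ forces surjectivity) is a welcome addition: the paper's step ``$\epsilon(\zeta_1+\zeta_2)\notin D$ and $L$ linear imply $\epsilon(L(\zeta_1)+L(\zeta_2))\notin L(D)$'' silently uses injectivity, and you supply exactly the missing argument.
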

\begin{proof}
Let $L:D\rightarrow \cv^n$ be a linear map with $L(0)=0$ and suppose $\ball(3\epsilon)\subset L(D)$. Since $\epsilon(\zeta_1+\zeta_2)\not\in D$ and $L$ is linear, we have $\epsilon(L(\zeta_1)+L(\zeta_2))\not\in L(D)$. This implies that $\epsilon(L(\zeta_1)+L(\zeta_2))\not\in \ball(3\epsilon)$ and thus $\|L(\zeta_1)+L(\zeta_2)\|\ge 3$. However,
$\|L(\zeta_1)+L(\zeta_2)\|\leq \|L(\zeta_1)\|+\|L(\zeta_2)\|\leq 1+1=2.$
This completes the proof.
\end{proof}

\begin{proof}[Proof of Theorem \ref{T:main}]
Choose local coordinates $(z,w,t)$ such that $q=(0,0,0)$ and let $p=(-\delta,0,0)$ for $\delta>0$ small. Let $\zeta_1=(1,0,0)$ and $\zeta_2=(0,1,0)$, the two directions along which the regular order of contact at $q$ is greater than $2d=4k$. By Lemma \ref{L:10}, $K_\Omega(p,\zeta_1),K_\Omega(p,\zeta_2)\lesssim \delta^{-\frac{1}{4k+1}}$. By Lemma \ref{L:11}, $K_\Omega(p,\frac{1}{\sqrt{2}}(\zeta_1+\zeta_2))\gtrsim \delta^{-\frac{1}{4k}}$.

Choose $\lambda>0$ with $\lambda\gtrsim \delta^{\frac{1}{4k+1}}$ such that $\lambda \zeta_1,\lambda \zeta_2\in D_\Omega(p)$. Then for $\epsilon \simeq \delta^{\frac{1}{4k(4k+1)}}$, we have $\epsilon (\lambda \zeta_1+ \lambda \zeta_2)\not\in D_\Omega(p)$. Thus, by Lemma \ref{L:squeezing}, there does not exist a linear map $L:D_\Omega(p)\rightarrow \cv^3$ such that $\ball(3\epsilon)\subset L(D_\Omega(p))\subset \ball(1)$.

Let $f$ be a biholomorphism of $\Omega$ into $\ball(1)$ such that $f(p)=0$ and $\ball(c)\subset f(\Omega)$ for some $c>0$. Set $L=f'(p)$. Then, by Lemmas \ref{L:ball}, \ref{L:indicatrix} and \ref{L:indicatrix1}, $\ball(c)\subset L(D_\Omega(p))\subset \ball(1)$. Therefore, we have $c\lesssim \delta^{\frac{1}{4k(4k+1)}}$. Since $f$ is arbitrary, we get $s_\Omega(p) \lesssim \delta^{\frac{1}{4k(4k+1)}}$. Since $\delta$ can be arbitrarily small, this completes the proof.
\end{proof}

\begin{rmk}
Theorem \ref{T:main} does not hold if only assuming that the regular order of contact at $q$ is greater than $2d$ along one smooth complex curve. For instance, consider $\Omega$ given by
$$\{(z,w,t)\in \cv^3:\ |t|^2+|z|^2+|w|^6<1\}.$$
Then at $q=(0,0,1)$, the Bloom-Graham type is $2$ and the regular order of contact along $(0,1,0)$ is $6>4$. But $\Omega$ is a bounded convex domain and thus the squeezing function has a uniform lower bound by \cite{KZ:squeezing}.
\end{rmk}

\begin{rmk}
Using similar arguments, one can extend Theorem \ref{T:main} to higher dimensions as follows.

\begin{thm}
Let $\Omega$ be a bounded domain in $\cv^n$, $n\ge 4$, and $q\in \partial\Omega$. Assume that $\Omega$ is smooth and pseudoconvex in a neighborhood of $q$ and the Bloom-Graham type of $\Omega$ at $q$ is $d$. Moreover, assume that the regular order of contact at $q$ is $d$ along a two-dimensional complex surface $\Sigma$ and the regular order of contact at $q$ is greater than $2d$ along two smooth complex curves not tangent to each other contained in $\Sigma$. Then the squeezing function $s_\Omega(p)$ has no uniform lower bound near $q$.
\end{thm}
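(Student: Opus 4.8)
The plan is to mimic the proof of Theorem \ref{T:main} with the surface $\Sigma$ playing the role of the $(z,w)$-plane and the remaining $n-3$ complex tangential directions carried along as passive parameters. Write $d=2k$. First I would establish a normal form generalizing Lemma \ref{L:normal}: choosing local coordinates $(z,w,\eta,t)$ with $\eta=(\eta_1,\dots,\eta_{n-3})\in\cv^{n-3}$ and $t=u+iv$, so that $q=0$, the two-dimensional surface is $\Sigma=\{\eta=0,\,t=0\}$, the two distinguished curves are the $z$- and $w$-axes in $\Sigma$, and $\Omega$ is given near $q$ by $\rho<0$ with
$$\rho(z,w,\eta,t)=u+P(z,w,\eta)+Q(z,w,\eta)+vR(z,w,\eta)+u^2+v^2+o(u^2,uv,v^2,u|(z,w,\eta)|^{2k}).$$
Here $P$ is plurisubharmonic and homogeneous of degree $2k$; the hypothesis that $\Sigma$ has regular order of contact exactly $d$ forces the restriction $P(z,w,0)$ to be a nonzero, non-pluriharmonic degree-$2k$ polynomial, while the hypothesis that the two curves have order of contact greater than $2d$ forces $P(z,0,0)=P(0,w,0)=0$ together with $\deg Q(z,0,0),\deg Q(0,w,0)\ge 4k+1$; one also has $\deg Q\ge 2k+1$ and $\deg R\ge k+1$. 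The reduction to this form is the argument of Lemma \ref{L:normal} run verbatim, the variables $\eta$ being spectators throughout; the only new ingredient is the bookkeeping of the restrictions of $P$ and $Q$ to $\Sigma$ and to the two axes. As before, I would fix a diagonal direction inside $\Sigma$ by scaling so that $P(\tfrac{1}{\sqrt2}\tau,\tfrac{1}{\sqrt2}\tau,0)$ is subharmonic but not harmonic of degree $2k$. Set $p=(0,0,0,-\delta)$, $\zeta_1=(1,0,0,0)$ and $\zeta_2=(0,1,0,0)$, and work in $\Omega\cap U$ using the localization Lemma \ref{L:localization}.

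The upper bound $K_\Omega(p,\zeta_1),K_\Omega(p,\zeta_2)\lesssim\delta^{-\frac{1}{4k+1}}$ is Lemma \ref{L:10} verbatim: testing with the linear discs $\tau\mapsto(\beta\tau,0,0,-\delta)$ and $\tau\mapsto(0,\beta\tau,0,-\delta)$, $|\beta|=\epsilon\delta^{\frac{1}{4k+1}}$, one has $P$ vanishing and $\deg Q\ge 4k+1$ along the relevant axis, so $\rho\circ\phi\le-\delta+C|\beta\tau|^{4k+1}+o(\delta)<0$. The extra coordinates are set to zero and play no role.

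The main obstacle is the lower bound $K_\Omega(p,\tfrac{1}{\sqrt2}(\zeta_1+\zeta_2))\gtrsim\delta^{-\frac{1}{4k}}$, the analogue of Lemma \ref{L:11}. As there, I would absorb $vR$ into $v^2$ to pass to $\tilde\rho\ge u+P(z,w,\eta)+\tilde Q+u^2/2+v^2/4+o(u|\cdot|^{2k})$, then take an arbitrary disc $\phi(\tau)=(\beta\tau+f,\beta\tau+g,\psi,-\delta+h)$ into $\Omega$ and use the sub-mean-value inequality $\frac{1}{2\pi}\int_0^{2\pi}\tilde\rho(\phi(re^{i\theta}))\,d\theta<0$. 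Since $\zeta$ is tangent to $\Sigma$, the $\eta$-component satisfies $\psi=O(\tau^2)$, exactly like $f,g,h$. The delicate point, absent in $\cv^3$, is that once one sets $|\tau|\simeq|\beta|$ every monomial of $P$ evaluated on $\phi$ has the common size $|\beta|^{4k}$, so the $\eta$-monomials cannot be discarded as higher order. The resolution is the device already used for $f,g$: put $|\tau|=c|\beta|$ with $c$ small and track powers of $c$. Writing the arguments in units of $|\beta|^2$, the $z,w$-components are $O(c)$ while $\psi=O(c^2)$, so by homogeneity a monomial of $P$ carrying $m$ factors of $\eta$ or $\bar\eta$ contributes, after averaging, a term of size $\simeq c^{2k+m}|\beta|^{4k}$; for $m=0$ this is a positive multiple of $c^{2k}|\beta|^{4k}$, the positivity being precisely the statement that the diagonal restriction of $P$ is subharmonic but not harmonic. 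Hence for $c$ small the diagonal term dominates all corrections (from $\eta$ and from $f,g$ alike), and the averaged inequality yields $|\beta/2|^{4k}\lesssim\delta$, i.e. the claimed lower bound. This is the step requiring the most care: one must verify that the $\eta$-contributions genuinely carry a strictly higher power of $c$, so that tangency to $\Sigma$ neutralizes the extra directions.

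With the three metric estimates in hand the conclusion is identical to the proof of Theorem \ref{T:main}. Choosing $\lambda\simeq\delta^{\frac{1}{4k+1}}$ with $\lambda\zeta_1,\lambda\zeta_2\in D_\Omega(p)$ and $\epsilon\simeq\delta^{\frac{1}{4k(4k+1)}}$ so that $\epsilon(\lambda\zeta_1+\lambda\zeta_2)\notin D_\Omega(p)$, Lemma \ref{L:squeezing} (already stated in $\cv^n$) forbids a linear map $L$ with $\ball(3\epsilon)\subset L(D_\Omega(p))\subset\ball(1)$. Applying this to $L=f'(p)$ together with Lemmas \ref{L:ball}, \ref{L:indicatrix} and \ref{L:indicatrix1} gives $s_\Omega(p)\lesssim\delta^{\frac{1}{4k(4k+1)}}$, which tends to $0$ as $\delta\to0$; this completes the proof.
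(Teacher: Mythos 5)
Your route is the one the paper intends (the paper itself offers only the sentence ``using similar arguments''), and the part you single out as delicate --- the power counting in the analogue of Lemma \ref{L:11}, where a monomial of $P$ carrying $m$ factors of $\eta$ or $\bar\eta$ contributes $O(c^{2k+m}|\beta|^{4k})$ after setting $|\tau|=c|\beta|$, so that the diagonal term dominates for $c$ small --- is handled correctly. The genuine gap is elsewhere, in the sentence ``the hypothesis that $\Sigma$ has regular order of contact exactly $d$ forces the restriction $P(z,w,0)$ to be a nonzero, non-pluriharmonic degree-$2k$ polynomial.'' Order of contact exactly $d$ along $\Sigma$ gives only $P(z,w,0)\neq 0$; it does not give non-pluriharmonicity. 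In $\cv^3$ this issue never arises, because Forelli's theorem is applied to the full $P$ on $\cv^2=\Sigma$ and non-pluriharmonicity of the full $P$ is supplied by Lemma \ref{L:normal}. For $n\ge 4$, non-pluriharmonicity of $P$ on $\cv^{n-1}$ does not pass to the slice $\Sigma$. Concretely, take $P(z,w,\eta)=\Re(z^kw^k)+|\eta_1|^{2k}$ and the domain $\{\Re t+\Re(z^kw^k)+|\eta_1|^{2k}+|t|^2<0\}$, suitably bounded off: it is smooth and pseudoconvex, has Bloom--Graham type $2k$ (the term $|\eta_1|^{2k}$ cannot be cancelled by any pluriharmonic term), has regular order of contact exactly $2k$ along $\Sigma=\{\eta=0,\,t=0\}$ since $\Re(z^kw^k)\not\equiv 0$ there, and has order of contact $>4k$ along both axes, which are non-tangent curves in $\Sigma$. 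Yet $P|_\Sigma=\Re(z^kw^k)$ is pluriharmonic, hence harmonic on every complex line through the origin in $\Sigma$; the circle average of $P(\beta\tau,\beta\tau,0)$ vanishes identically, your coefficient of $c^{2k}|\beta|^{4k}$ is zero, and the sub-mean-value inequality places no constraint on $\beta$. The lower bound $K_\Omega(p,\zeta)\gtrsim\delta^{-\frac{1}{4k}}$ is therefore not established for a domain satisfying every stated hypothesis.

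Note also that the obvious repair --- first absorb the pluriharmonic part of $P$ into $t$, after which $P$ has only mixed monomials and ``$P|_\Sigma\neq 0$'' really would imply ``$P|_\Sigma$ not pluriharmonic'' --- does not close the gap: in the example the absorption removes $\Re(z^kw^k)$ entirely, the new $P$ restricts to $0$ on $\Sigma$, and the order of contact along $\Sigma$ remains $2k$ only on account of the pluriharmonic term just discarded. So either the hypothesis on $\Sigma$ must be strengthened to say that the non-pluriharmonic part of the degree-$d$ jet of $\rho|_\Sigma$ is nonzero (which is what the averaging argument actually consumes), or the degenerate case must be treated by a different mechanism (in the example the needed growth of the Kobayashi metric along the diagonal seems to come from the $|t|^2$ term via a Schwarz-lemma argument in the variable $t+z^kw^k$, not from $P$ at all). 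A far more minor omission: you should say a word about arranging coordinates so that the normal form of Lemma \ref{L:normal} holds while $\Sigma$ is the plane $\{\eta=0,\,t=0\}$ and the two possibly curved curves are straightened to the axes; this is routine but it is where ``contained in $\Sigma$ and not tangent to each other'' is actually used.
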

\end{rmk}

\begin{rmk}
After the completion of this work, it was brought to our attention by Gregor Herbort that a similar comparison result to \cite{DF} was obtained for the following domain in \cite{DFH:Bergman}:
$$\Omega:=\{(z,w,t)\in \cv^3:\ \Re t+|z|^{12}+|w|^{12}+|z|^2|w|^4+|z|^6|w|^2<0\}.$$
Therefore, by our remark in the introduction, the squeezing function does not have a uniform lower bound on this domain. More generally, we have the following
\begin{thm}
Let $\Omega$ be a bounded domain in $\cv^3$, and $q\in \partial\Omega$. Assume that $\Omega$ is smooth and pseudoconvex in a neighborhood of $q$ and the Bloom-Graham type of $\Omega$ at $q$ is $d<\infty$. Let $\rho$ be a defining function of $\Omega$ near $q$ in the normal form \eqref{E:normal} and assume that the leading homogeneous term $P(z)$ only contains positive terms. Moreover, assume that the regular order of contact at $q$ is greater than $d$ along two smooth complex curves not tangent to each other. Then the squeezing function $s_\Omega(p)$ has no uniform lower bound near $q$.
\end{thm}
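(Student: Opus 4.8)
The plan is to reproduce the three-step architecture of the proof of Theorem \ref{T:main} --- an upper bound for the Kobayashi metric along the two axis directions, a lower bound along their diagonal, and then Lemma \ref{L:squeezing} --- but to sharpen the diagonal lower bound so that it needs only order of contact $>d$ rather than $>2d$. First I would invoke Lemma \ref{L:normal} to put $\rho$ in the form \eqref{E:normal}, and use the hypothesis that the regular order of contact exceeds $d=2k$ along the two given curves to arrange, after a holomorphic change of coordinates, that the curves are the $z$- and $w$-axes; this forces $P(z,0)=P(0,w)=0$, while the orders of contact along the axes are some $m_1,m_2>d$. With $p=(0,0,-\delta)$, the linear discs $\tau\mapsto(\beta\tau,0,-\delta)$ and $\tau\mapsto(0,\beta\tau,-\delta)$ give, exactly as in Lemma \ref{L:10}, $K_\Omega(p,\zeta_i)\lesssim\delta^{-1/m_i}$, so $D_\Omega(p)$ reaches distance $\gtrsim\delta^{1/m_i}$ along $\zeta_i$.

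The heart of the argument is the diagonal bound. Here I would use the positivity hypothesis to write $P(z,w)=\sum_{a+b=k}c_{ab}|z|^{2a}|w|^{2b}$ with $c_{ab}\ge0$ not all zero, so that along any disc $\phi(\tau)=(z(\tau),w(\tau),-\delta+h(\tau))$ with $\phi(0)=p$ and $z'(0)=w'(0)=\beta$ (the diagonal direction $\zeta=\tfrac{1}{\sqrt{2}}(1,1,0)$), the pullback $P(\phi)=\sum_{a+b=k}c_{ab}\,|z(\tau)^aw(\tau)^b|^2$ is a nonnegative combination of squared moduli of \emph{holomorphic} functions. Each $z(\tau)^aw(\tau)^b$ vanishes to order $k$ with $\tau^k$-coefficient $\beta^k$, so Parseval on $|\tau|=1$ gives $\frac{1}{2\pi}\int_0^{2\pi}P(\phi(e^{i\theta}))\,d\theta\ge\bigl(\sum c_{ab}\bigr)|\beta|^{2k}$. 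Since $\Re t$ is harmonic with mean $-\delta$, averaging $\rho(\phi)\le0$ yields $\bigl(\sum c_{ab}\bigr)|\beta|^{2k}\le\delta+E$, where $E$ collects the contributions of $Q$, $vR$, $u^2$, $v^2$ and the $o(\cdots)$ terms. The decisive point is that this Parseval bound is pinned to the prescribed derivative $\beta$ and is insensitive to the higher-order part of $\phi$; there is therefore no need to shrink to radius $|\tau|\simeq|\beta|$ as in Lemma \ref{L:11}, and this is exactly what replaces the exponent $2d$ by $d$, giving $K_\Omega(p,\zeta)\gtrsim\delta^{-1/d}$ and a diagonal reach $\lesssim\delta^{1/d}$.

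To conclude I would set $M=\min(m_1,m_2)>d$ and pick $\lambda\simeq\delta^{1/M}$, so that $\lambda\zeta_1,\lambda\zeta_2\in D_\Omega(p)$. Because $\lambda\zeta_1+\lambda\zeta_2=\sqrt{2}\,\lambda\zeta$ while the reach along $\zeta$ is $\lesssim\delta^{1/d}$, choosing $\epsilon\simeq\delta^{1/d-1/M}$ (a positive power of $\delta$, since $M>d$) makes $\epsilon(\lambda\zeta_1+\lambda\zeta_2)\notin D_\Omega(p)$. Lemma \ref{L:squeezing} then forbids any linear $L$ with $\ball(3\epsilon)\subset L(D_\Omega(p))\subset\ball(1)$, and repeating the final paragraph of the proof of Theorem \ref{T:main} (through Lemmas \ref{L:ball}, \ref{L:indicatrix} and \ref{L:indicatrix1}) gives $s_\Omega(p)\lesssim\delta^{1/d-1/M}\to0$ as $\delta\to0$, so there is no uniform lower bound near $q$.

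The step I expect to cost real work is the control of the error $E$ in the diagonal estimate. Since the higher-order part of $\phi$ may be of size $|\tau|^2$, which dominates the linear part $\beta\tau$ once $|\tau|>|\beta|$, the signed integral $\frac{1}{2\pi}\int Q(\phi)$ cannot be discarded by a crude size bound at radius one. When every term of $\rho-u$ is nonnegative --- as for the displayed domain $\Re t+|z|^{12}+|w|^{12}+|z|^2|w|^4+|z|^6|w|^2$, where $P=|z|^2|w|^4$ --- this is a non-issue: one keeps a single positive monomial and drops all the others, obtaining $\frac{1}{2\pi}\int|z|^2|w|^4\,d\theta\le\delta$ and hence $|\beta|^{6}\le\delta$ directly. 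In general one must instead show that the $\tau^k$-mode contribution of $P$ dominates $E$, for instance by a dominance estimate forcing the disc to stay close enough to the diagonal that the degree-$(\ge 2k+1)$ terms remain of lower order than the degree-$2k$ term $P$.
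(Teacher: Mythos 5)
Your proposal reproduces, in more detail, exactly the route the paper takes: its own ``proof'' of this theorem is a three-line sketch that reruns Lemmas \ref{L:10} and \ref{L:11} with the exponents $4k+1$ and $4k$ replaced by $2k+1$ and $2k$, the improvement in the diagonal bound coming from precisely your observation that positivity of $P$ eliminates the negative correction sum in \eqref{E:xitau} (your Parseval computation on the holomorphic functions $z(\tau)^aw(\tau)^b$ is the right way to make that precise), so that one need not retreat to the radius $|\tau|\simeq|\beta|$. The one substantive issue you flag at the end --- that at radius $|\tau|\simeq 1$ the degree-$(\ge 2k+1)$ terms $Q$ and the term absorbed from $vR$ are of size comparable to the (fixed) localization neighborhood rather than to $\delta$, so the averaged inequality does not immediately yield $|\beta|^{2k}\lesssim\delta$ --- is genuine, but it is not resolved by the paper either: the sketch simply asserts ``$|\xi\tau|^{2k}\lesssim\delta$'' without addressing it, and it is harmless only in examples such as the Diederich--Forn\ae ss--Herbort domain where all of $\rho-\Re t$ is positive. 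So your attempt is faithful to the paper's argument and, if anything, more honest about where the remaining work lies; the only genuine difference is cosmetic, namely that by using the actual contact orders $m_i$ instead of the worst case $d+1$ you record the marginally sharper estimate $s_\Omega(p)\lesssim\delta^{1/d-1/M}$ in place of $\delta^{1/(d(d+1))}$.
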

\begin{proof}[Sketch of proof]
In Lemma \ref{L:10}, we get $K_\Omega(p,\zeta_1),K_\Omega(p,\zeta_2)\lesssim \delta^{-\frac{1}{2k+1}}$, by the same argument. In Lemma \ref{L:11}, we get $K_\Omega(p,\zeta)\gtrsim \delta^{-\frac{1}{2k}}$, by noticing that instead of \eqref{E:xitau} we have $|\xi\tau|^{2k}\lesssim \delta$ since all terms of $P(z)$ are positive. Then arguing exactly as in the proof of Theorem \ref{T:main}, we get $s_\Omega(p) \lesssim \delta^{\frac{1}{2k(2k+1)}}$.
\end{proof}
\end{rmk}


\begin{thebibliography}{}

\bibitem{BG}
T. Bloom, I. Graham;
\emph{A geometric characterization of points of type $m$ on real submanifolds of $\cv^n$},
J. Differential Geom. {\bf 12} (1977), 171-182.

\bibitem{D'Angelo}
J.P. D'Angelo;
Several Complex Variables and the Geometry of Real Hypersurfaces,
Studies in Advanced Mathematics, CRC Press, Boca Raton, FL, 1993.

\bibitem{DGZ1}
F. Deng, Q. Guan, L. Zhang;
\emph{Some properties of squeezing functions on bounded domains},
Pacific J. Math. {\bf 257} (2012), 319-341.

\bibitem{DGZ2}
F. Deng, Q. Guan, L. Zhang;
\emph{Properties of squeezing functions and global transformations of bounded domains},
Trans. Amer. Math. Soc. {\bf 368} (2016), 2679-2696.

\bibitem{DF}
K. Diederich, J.E. Forn\ae ss;
\emph{Comparison of the Bergman and the Kobayashi metric},
Math. Ann. {\bf 254} (1980), 257-262.

\bibitem{DF:Bergman}
K. Diederich, J.E. Forn\ae ss;
\emph{Boundary behavior of the Bergman metric},
preprint, 2015, arXiv:1504.02950.

\bibitem{DFH:Bergman}
K. Diederich, J.E. Forn\ae ss, G. Herbort;
\emph{Boundary behavior of the Bergman metric},
in ``Complex Analysis of Several Variables" (Madison, Wis., 1982), 59-67,
Proc. Sympos. Pure Math. {\bf 41}, Amer. Math. Soc., Providence, RI, 1984.

\bibitem{DFW:squeezing}
K. Diederich, J.E. Forn\ae ss, E.F. Wold;
\emph{A characterization of the ball},
Internat. J. Math. {\bf 27} (2016), 1650078, 5 pp.

\bibitem{Forelli}
F. Forelli;
\emph{Pluriharmonicity in terms of harmonic slices},
Math. Scand. {\bf 41} (1977), 358-364.

\bibitem{FL:metrics}
J.E. Forn\ae ss, L. Lee;
\emph{Kobayashi, Carath\'{e}odory and Sibony metric},
Complex Var. Elliptic Equ. {\bf 54} (2009), 293-301.

\bibitem{FS:squeezing}
J.E. Forn\ae ss, N. Shcherbina;
\emph{A domain with non-plurisubharmonic squeezing function},
J. Geom. Anal., to appear.

\bibitem{FW:squeezing}
J.E. Forn\ae ss, E.F. Wold;
\emph{An estimate for the squeezing function and estimates of invariant metrics},
in ``Complex Analysis and Geometry", 135-147, Springer Proc. Math. Stat. {\bf 144}, Springer, Tokyo, 2015.

\bibitem{Frankel}
S. Frankel;
\emph{Applications of affine geometry to geometric function theory in several complex variables. I. Convergent rescalings and intrinsic quasi-isometric structure},
in ``Several Complex Variables and Complex Geometry", Part 2 (Santa Cruz, CA, 1989), 183-208, Proc. Sympos. Pure Math., vol. {\bf 52}, Part 2, Amer. Math. Soc., Providence, RI, 1991.

\bibitem{KZ:squeezing}
K.-T. Kim, L. Zhang;
\emph{On the uniform squeezing property of bounded convex domains in $\cv^n$},
Pacific J. Math. {\bf 282} (2016), 341-358.

\bibitem{K:indicatrix}
S. Kobayashi;
\emph{Intrinsic distances, measures and geometric function theory},
Bull. Amer. Math. Soc. {\bf 82} (1976), 357-416.

\bibitem{LSY1}
K. Liu, X. Sun, S.-T. Yau;
\emph{Canonical metrics on the moduli space of Riemann surfaces. I},
J. Differential Geom. {\bf 68} (2004), 571-637.

\bibitem{LSY2}
K. Liu, X. Sun, S.-T. Yau;
\emph{Canonical metrics on the moduli space of Riemann surfaces. II},
J. Differential Geom. {\bf 69} (2005), 163-216.

\bibitem{Y:squeezing}
S.-K. Yeung;
\emph{Geometry of domains with the uniform squeezing property},
Adv. Math. {\bf 221} (2009), 547-569.

\end{thebibliography}
\end{document}